\newcommand{\boundellipse}[3]
{(#1) ellipse (#2 and #3)
}
\theoremstyle{plain}
\newtheorem{theorem}{Theorem}[section]
\newtheorem{lemma}[theorem]{Lemma}
\newtheorem{observation}[theorem]{Observation}
\newtheorem{conjecture}[theorem]{Conjecture}
\theoremstyle{definition}
\newtheorem{definition}[theorem]{Definition}
\newtheorem{question}[theorem]{Question}
\newtheorem{example}[theorem]{Example}
\theoremstyle{remark}
\newcommand{\F}{\mathcal{F}}
\newcommand{\Forb}{\text{Forb}}
\newcommand{\Tr}[2]{T_{#1,#2}}
\newcommand{\cTr}[2]{\overline{T}_{#1,#2}}
\title{A Proof of the $(n,k,t)$-Conjectures}
\author{Stacie Baumann}
\address{Department of Mathematics, College of Charleston\\
SC, USA}
\email{baumannsm@cofc.edu}
\author{Joseph Briggs}
\address{Department of Mathematics and Statistics,
Auburn University\\
AL, USA}\email{jgb0059@auburn.edu}
\begin{document}

\maketitle
\begin{abstract}
An \emph{$(n,k,t)$-graph} is a graph on $n$ vertices in which every 
set of $k$ vertices contains a clique on $t$ vertices.
Tur\'an's Theorem, rephrased in terms of graph complements, states that the unique minimum $(n,k,2)$-graph is an equitable disjoint union of cliques.
We prove that minimum $(n,k,t)$-graphs are always disjoint unions of cliques for any $t$ (despite \allowbreak nonuniqueness of extremal examples), thereby generalizing
Tur\'an's Theorem and
confirming two conjectures of Hoffman et al.

\medskip
\noindent \textsc{Keywords.} $(n,k,t)$-Conjectures, Tur\'{a}n's Theorem, Extremal Graph Theory

\medskip
\noindent \textsc{Mathematics Subject Classification.} 05C35

\end{abstract}

\section{Introduction}


The protagonists of this paper are  $(n,k,t)$-graphs.
Throughout this paper, it is assumed $n,k,t$  and $r$ are  positive integers and $n \geq k \geq t$.

\begin{definition}
A graph $G$ is an $(n,k,t)$-\emph{graph} if $\vert V(G) \vert = n$ and every induced subgraph on $k$ vertices contains a clique on $t$ vertices.
A \emph{minimum} $(n,k,t)$-graph is an $(n,k,t)$-graph with the minimum number of edges among all $(n,k,t)$-graphs.
\end{definition}

The study of the minimum number of edges in an $(n,k,t)$-graph, and so implicitly the structure of minimum $(n,k,t)$-graphs, is a natural extremal graph theory problem---as we will see shortly, this setting generalizes the flagship theorems of Mantel and Tur\'an.
Hoffman, Johnson, and McDonald \cite{Hoffman} conjectured the following.
 
\begin{conjecture}[The Weak $(n,k,t)$-Conjecture] \label{statementweakconjecture}
For any positive integers $n \geq k \geq t$, there exists a minimum $(n,k,t)$-graph that is a disjoint union of cliques.
\end{conjecture}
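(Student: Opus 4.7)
The plan is a Zykov-style symmetrization argument, adapted to the $(n,k,t)$-setting. Take a minimum $(n,k,t)$-graph $G$ and suppose for contradiction it is not a disjoint union of cliques, so non-adjacency is not transitive on $V(G)$. Then there exist non-adjacent vertices $u,v$ with $N(u)\neq N(v)$. Define $G_u$ by replacing $v$'s neighborhood with $N(u)$ (making $v$ a non-adjacent twin of $u$), and $G_v$ symmetrically. A direct degree count gives $|E(G_u)| + |E(G_v)| = 2\,e(G)$, so at least one of them, say $G_u$, has $\leq e(G)$ edges. The strategy is to iterate such a move: at each step pick a non-adjacent, non-twin pair and apply the twinning in the edge-preserving direction, while certifying that the $(n,k,t)$ property is preserved. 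Each step strictly increases the number of twin pairs without increasing the edge count, so the process must terminate at a graph in which non-adjacency is an equivalence relation, i.e.\ a disjoint union of cliques.

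The crux is verifying that $G_u$ remains $(n,k,t)$. For any $k$-set $S$: if $v\notin S$, nothing in $G[S]$ has changed; if $v\in S$ but $u\notin S$, then $S' := (S\setminus\{v\})\cup\{u\}$ is a $k$-set of $G$ and hence contains a $K_t$, which transfers to $S$ in $G_u$ by swapping the roles of $u$ and $v$ (now twins). The hard case is $u,v\in S$, where the $K_t$'s in $G[S]$ may all pass through edges incident to $v$ that the twinning destroys. To break this, I would select $u,v$ within a maximum $K_t$-free induced subgraph $F\subseteq V(G)$; since $|F|\leq k-1$, any bad $k$-set $S$ must contain some $x\notin F$, and maximality of $F$ forces $x$ to form a $K_t$ with some $(t-1)$-clique of $F$. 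This produces a $K_t$ in $G[S]$ that avoids the modified edges at $v$, \emph{provided} that the $(t-1)$-clique avoids both $u$ and $v$.

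The main obstacle is exactly that proviso: the replacement $(t-1)$-clique in $F$ may pass through $u$ or $v$, sabotaging the local argument. Resolving this will require a careful extremal choice of the pair — for instance, picking $u$ of minimum degree and then $v$ minimizing $|N(v)\setminus N(u)|$ — together with a monovariant such as $(e(G),\,\text{number of twin pairs},\,|F|)$ in lexicographic order to guarantee a ``safe'' pair at every step. As a fallback, if direct symmetrization fails in some configuration, I would reduce to $(n-1,k-1,t)$-graphs by peeling off an isolated vertex (which a minimum graph frequently admits) or to a problem with smaller $t$ by collapsing an appropriate clique, running a double induction on $n$ and $t$. Balancing these reductions against the minimality of $e(G)$ is, I expect, the real technical work of the proof.
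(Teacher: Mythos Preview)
Your Zykov-style symmetrization is a genuinely different route from the paper's. The paper proves the (stronger) statement that every minimum $(n,k,t,r)$-graph is a clique union by induction on $t$: remove a maximum independent set $S$ of size $r$, note (Lemma~\ref{minusset}) that $G-S$ is an $(n-r,k-r,t-1)$-graph, replace $G-S$ by a minimum such graph $H$ with $\alpha(H)\le r$ (a clique union by the inductive hypothesis), and then re-attach the $r$ deleted vertices as one new vertex per component of $H$ together with $r-c(H)$ isolated vertices. A direct pigeonhole check (via Lemma~\ref{l.nktrelation}) shows the resulting clique union $G'$ is again an $(n,k,t)$-graph with $\alpha(G')=r$, and a simple edge count gives $|E(G')|\le|E(G)|$; this already yields the Weak Conjecture, with no local vertex moves at all.

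Your outline, by contrast, has a real gap in the case $u,v\in S$, and the fix you propose does not close it. Picking $u,v$ inside a maximum $K_t$-free set $F$ does force every $k$-set $S$ to contain some $x\notin F$ (since $|F|\le k-1$), and maximality of $F$ does give a $(t-1)$-clique $C\subseteq F$ with $C\cup\{x\}$ a $K_t$ in $G$ --- but there is no reason for $C\subseteq S$, so this does not place a $K_t$ inside $G_u[S]$. The obstacle is therefore worse than the ``$C$ might hit $u$ or $v$'' issue you isolate. More fundamentally, the edge-non-increasing twinning move is not safe for the $(n,k,t)$ property when $t\ge 3$: already in the $(7,5,3)$-graph $K_3+K_4$, twinning a $K_4$-vertex $v$ to a non-adjacent $K_3$-vertex $u$ (the forced direction, since $d(u)<d(v)$) produces a graph in which $\{u,v,c,a,b\}$ (with $c\in N_G(u)$ and $a,b$ former neighbours of $v$) is a triangle-free $5$-set. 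That particular $G$ is already a clique union, so your procedure would not begin there --- but the example shows the local step can destroy the property, and nothing in your sketch invokes the global minimality of $G$ to prevent this. The monovariant refinements and the inductive fallback are not developed enough to evaluate; as written, the argument does not close.
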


\begin{conjecture}[The Strong $(n,k,t)$-Conjecture] \label{strongconj}
For any postive integers $n \geq k \geq t$,
every minimum $(n,k,t)$-graph is a disjoint union of cliques.
\end{conjecture}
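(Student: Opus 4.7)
The plan is to deduce the Strong Conjecture from the Weak Conjecture (which we assume has been established earlier in the paper) via a symmetrization argument combined with the edge-criticality of minimum graphs. The Weak Conjecture gives the exact minimum edge count $e(n,k,t)$, realized by a disjoint union of cliques $G_0 = K_{n_1} \sqcup \cdots \sqcup K_{n_m}$ whose partition satisfies $\sum_i \min(t-1, n_i) \leq k-1$. Our task is then to show that every minimum $(n,k,t)$-graph $G$ must itself be a disjoint union of cliques, or equivalently, that $G$ contains no induced $P_3$.

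Suppose for contradiction that a minimum $(n,k,t)$-graph $G$ does contain an induced path $u \sim v \sim w$ with $u \not\sim w$. By minimality, every edge of $G$ is critical: for each edge $e$, there is a $k$-subset $S_e$ containing $e$ such that every $K_t$ in $G[S_e]$ uses $e$. Applying this to $e = uv$ and $e = vw$ yields two witnesses $S_{uv}, S_{vw}$ with very restricted $K_t$-structure; in particular, $G[S_e \setminus \{x\}]$ is $K_t$-free for each endpoint $x$ of $e$. The next step is to construct a modification of $G$---a neighborhood swap between $v$ and a well-chosen other vertex, or a surgical rewiring of edges near $\{u,v,w\}$---that produces a new $(n,k,t)$-graph with strictly fewer edges, or with equally many edges but fewer induced $P_3$'s, yielding the desired contradiction once $G$ is chosen extremally with respect to this secondary count.

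The main obstacle is that a naive Zykov-style symmetrization (replacing $N(v)$ by $N(u)$ when $\deg v \geq \deg u$) fails precisely on $k$-subsets $S \supseteq \{u,v\}$ whose only $K_t$'s pass through $v$: the modification destroys those $K_t$'s and breaks the $(n,k,t)$-property. To circumvent this, one exploits the criticality witnesses: the restricted $K_t$-structure of $S_{uv}$ and $S_{vw}$ should force many common neighbors of $u, v, w$ with specific adjacency patterns, eventually contradicting either the minimality of $G$ or the extremality in the secondary invariant. The combinatorial interplay between these witnesses and the modification is where the bulk of the technical work will concentrate, and it should dovetail with the structural lemmas about optimal partition shapes that will already have been developed for the Weak Conjecture.
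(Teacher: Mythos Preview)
Your proposal is not a proof but a plan whose central step is explicitly left open. You correctly identify that a na\"ive Zykov symmetrization fails (replacing $N(v)$ by $N(u)$ can destroy all $K_t$'s in a $k$-set through $v$), but you do not actually resolve this obstacle: the phrases ``should force many common neighbors'' and ``is where the bulk of the technical work will concentrate'' are exactly where the argument would need to be. Edge-criticality gives you, for each edge $e$, a single $k$-set $S_e$ whose $K_t$'s all use $e$, but it gives no control over the many \emph{other} $k$-sets that a symmetrization move might break. Without a concrete mechanism to repair or avoid those, the contradiction you promise never materializes.

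A second structural issue is your assumption that the Weak Conjecture has been proved separately and can be used as input. The paper does not proceed this way: it proves both conjectures simultaneously by strengthening the statement. The actual argument introduces the independence number as an extra parameter, defines an $(n,k,t,r)$-graph to be an $(n,k,t)$-graph with $\alpha(G)=r$, and proves by induction on $t$ that every minimum $(n,k,t,r)$-graph is a disjoint union of cliques. The inductive step removes a maximum independent set $S$ of size $r$; by Lemma~\ref{minusset}, $G-S$ is an $(n-r,k-r,t-1)$-graph with $\alpha\le r$, to which the induction hypothesis applies. One then reconstructs a clique-union competitor $G'$ by reattaching the $r$ deleted vertices one per component, shows $|E(G)|\ge|E(G')|$, and in the equality case reads off that $G-S$ is itself a minimum clique-union and each vertex of $G-S$ has exactly one neighbour in $S$. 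Two short combinatorial claims then force those attachments to respect the component structure. The key idea you are missing is this passage to the $(n,k,t,r)$ problem: fixing $\alpha$ is what makes the induction on $t$ go through, and it sidesteps entirely the symmetrization difficulties you encountered.
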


Note that throughout this paper ``disjoint union'' refers to a \emph{vertex}-disjoint union, and a disjoint union of cliques allows isolated vertices (namely cliques of size 1).

Partial progress toward proving the conjectures can be found in \cite{Hoffman, noble2017application}.
These results will be discussed in more detail in Section \ref{prevresults}.
We prove the following theorem, confirming the Strong (and therefore also the Weak) $(n,k,t)$-Conjecture.  

\begin{theorem} \label{strongthm}
For any positive integers $n \geq k \geq t$, all minimum $(n,k,t)$-graphs are a disjoint union of cliques.
\end{theorem}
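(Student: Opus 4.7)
The plan is to argue by contradiction. Suppose $G$ is a minimum $(n,k,t)$-graph that is not a disjoint union of cliques. Because a graph is $P_3$-free if and only if it is a disjoint union of cliques, $G$ must contain an induced path on vertices $u,v,w$ with $uv,vw\in E(G)$ and $uw\notin E(G)$. My aim is to extract from this $P_3$ an $(n,k,t)$-graph $G^*$ on the same vertex set with $|E(G^*)|<|E(G)|$, which contradicts minimality and proves the theorem.

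The first attempt is the naive edge deletion $G^*=G-vw$. If this preserves the $(n,k,t)$-property we are done; otherwise there must be some $k$-set $S\supseteq\{v,w\}$ every $K_t$ of which in $G[S]$ uses the edge $vw$, forcing tight structural constraints (a $K_{t-2}$ in $N(v)\cap N(w)\cap S$ and no alternative $K_t$ in $G[S]$). In this failure case I would turn to a Zykov-style symmetrization at the non-edge $uw$: replace $N(w)$ by $N(u)$ to form $G_u$, or symmetrically $N(u)$ by $N(w)$ to form $G_w$, always choosing the smaller-degree vertex as the ``model'' so that the resulting graph has at most $|E(G)|$ edges. Verifying that the symmetrized graph is still an $(n,k,t)$-graph is routine when $w\notin S$, and is manageable by a swap argument when exactly one of $u,w$ lies in $S$: one transfers a $K_t$ from $G[(S\setminus\{w\})\cup\{u\}]$ into $G_u[S]$ using the identity $N_{G_u}(w)=N_G(u)$.

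The crux is the case $\{u,w\}\subseteq S$: no external vertex is available to swap in, and a $K_t$ through $w$ in $G[S]$ need not remain one after replacing $N(w)$ with $N(u)$. I would tackle this by choosing the inducing $P_3$ extremally — for instance with $d_G(w)$ minimum among all candidate triples — and by leveraging the intermediate vertex $v\in N(u)\cap N(w)$, which should lie in $S$ whenever this obstruction arises, to substitute for $w$ in a problematic $K_t$. Even after surmounting this, the Strong conjecture (as opposed to the Weak) demands \emph{strict} inequality $|E(G^*)|<|E(G)|$, which Zykov alone provides only when $d_G(u)\ne d_G(w)$. The remaining flat case must be eliminated separately: possibly by observing that when $d_G(u)=d_G(w)$ the twins created in $G_u$ enable a secondary modification that does decrease the edge count, or by iterating symmetrizations until a disjoint union of cliques is reached and then comparing its edge count back against $G$. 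I expect this last piece — promoting the edge-nonincreasing argument to a strictly edge-decreasing one, and thus lifting the Weak conjecture to the Strong one — to be the main obstacle.
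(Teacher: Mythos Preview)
Your proposal is a plan with two acknowledged but unresolved gaps, and both are genuine. For the first: when a witnessing $k$-set $S$ contains both $u$ and $w$, nothing you have written forces a $K_t$ to survive in $G_u[S]$. Your suggested fixes---choosing the inducing $P_3$ extremally, or exploiting the middle vertex $v$---are not arguments: nothing places $v$ in $S$, and even when $v\in S$ the lost $K_t$ on $\{w,x_1,\dots,x_{t-1}\}$ uses vertices $x_i\in N_G(w)\setminus N_G(u)$ that have no a priori relation to $v$. Observe also that after one symmetrization $u$ and $w$ become non-adjacent twins, so the very same $P_3$ on $u,v,w$ persists in $G_u$; your iteration does not obviously progress. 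More conceptually, symmetrizing along a non-edge of $G$ is, in the complement, symmetrizing along an \emph{edge}---this is not the classical Zykov move, and there is no monotonicity principle guaranteeing the $(n,k,t)$-property is preserved.

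The second gap is exactly the distance between the Weak and Strong conjectures. Even granting that every symmetrization preserves $(n,k,t)$, you obtain at best a disjoint union of cliques with no more edges than $G$, i.e.\ the Weak conjecture; the paper stresses that minimum $(n,k,t)$-graphs are \emph{not} unique for $t\geq 3$ (Example~\ref{multmin}), so edge count alone cannot separate a hypothetical non-clique-union minimum graph from a clique-union one, and your ``secondary modification'' in the equal-degree case is speculation. The paper sidesteps both difficulties by a completely different, global route: it strengthens the statement to fix the independence number $r$ (Theorem~\ref{mainthm}), removes a maximum independent set $S$ to obtain an $(n-r,k-r,t-1)$-graph, and inducts on $t$. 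Equality in the resulting edge comparison forces $G-S$ itself to be a disjoint union of cliques with every vertex having exactly one neighbour in $S$, after which two short structural facts reconstruct $G$ rigidly. The key idea you are missing is that conditioning on $\alpha(G)=r$ restores enough uniqueness (Observation~\ref{uniqueindnum}) to turn a non-strict comparison into an equality argument.
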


We prove Theorem \ref{strongthm} by proving a stronger statement involving the independence number of the graph (see Theorem \ref{mainthm}).

Given the implicit equivalence relation on the vertex set arising from the adjacency relation in a disjoint union of cliques, a natural approach to try to prove Conjecture \ref{statementweakconjecture} is to mimic the Zykov symmetrization proof \cite{zykov1949some} of Tur\'{a}n’s theorem, which (when complemented) involves repeated applications of the following:
\begin{observation}
    Let G be an $(n,k,2)$-graph and $x$,$y$ be adjacent vertices in G. Then replacing $y$ with a copy $x’$ of $x$, with $x’$ adjacent to $x$, also gives an $(n,k,2)$-graph.
    \end{observation}

In contrast, there is no equivalent fact for $(n,k,t)$-graphs when $t>2$. For example, a $K_4$ with a pendant edge $xy$ (such that $y$ is in the $K_4$) is a $(5,4,3)$-graph, but replacing $y$ with a copy $x’$ of $x$ gives a disjoint triangle and edge, which is not a $(5,4,3)$-graph.

\section{Previous Results} \label{prevresults}

Given graphs $G$ and $H$, let $\overline{G}$ denote the complement of $G$, let $G + H$ denote the disjoint union of $G$ and $H$, and for a positive integer $s$ let $sG = G + \dots + G$ ($s$ times).
The authors in \cite{Hoffman} discussed the following basic cases of the Strong $(n,k,t)$-Conjecture.
\begin{itemize}
    \item $t=1$. Every graph on $n$ vertices is an $(n,k,1)$-graph, so the unique minimum $(n,k,1)$-graph is $nK_1$.
    \item $k=t \geq 2$. The unique minimum $(n,k,k)$-graph is $K_n$.
    \item $n=k$. The unique minimum $(n,n,t)$-graph is $(n-t)K_1+K_t$.
\end{itemize}

When $t=2$, Tur\'{a}n's Theorem, which we recall here, implies the Strong $(n,k,t)$-conjecture.
If $r \leq n$ are positive integers let $\Tr{n}{r} = \overline{K_{p_1} + \dots + K_{p_r}}$ where $p_1 \leq p_2 \leq \dots \leq p_r \leq p_1+1$ and $p_1 + p_2 \dots + p_r =n$ ($\Tr{n}{r}$ is called the \emph{Tur\'{a}n Graph}).
That is to say, $\cTr{n}{r}$ is a disjoint union of $r$ cliques where the number of vertices in each clique differs by at most one.
Tur\'{a}n's Theorem will be used throughout this paper.
The traditional statement, when rephrased in terms of graph complements, is below.

\begin{theorem}[Tur\'{a}n's Theorem]\label{turan}
The unique graph on $n$ vertices without an independent set of size $r+1$ with the minimum number of edges is $\cTr{n}{r}$.
\end{theorem}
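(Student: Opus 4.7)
The plan is to reduce to the classical clique form of Tur\'an's theorem via complementation. Let $G$ on $n$ vertices have no independent set of size $r+1$ and have the minimum number of edges, and set $H = \overline{G}$. Then $H$ is $K_{r+1}$-free, and minimizing $|E(G)|$ is equivalent to maximizing $|E(H)|$. Since $\overline{\Tr{n}{r}} = \cTr{n}{r}$, the theorem reduces to showing that the unique $K_{r+1}$-free graph on $n$ vertices with the maximum number of edges is $\Tr{n}{r}$.

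For this I would apply Zykov's symmetrization. First, observe that if $uv \notin E(H)$, then replacing $v$ by a ``twin'' of $u$ (i.e., giving $v$ the neighborhood $N_H(u)$, with $uv$ still a non-edge) yields a graph that is still $K_{r+1}$-free — any clique using the new $v$ corresponds to one using $u$ in $H$, as $u$ and the new $v$ are non-adjacent — and whose edge count changes by $\deg_H(u) - \deg_H(v)$. Extremality of $H$ therefore forces $\deg_H(u) = \deg_H(v)$ for every non-edge $uv$.

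Next, I claim that non-adjacency in $H$ is transitive, from which it follows that $H$ is complete multipartite. Suppose otherwise: $u \not\sim v$, $v \not\sim w$, but $u \sim w$. By the previous paragraph, $\deg_H(u) = \deg_H(v) = \deg_H(w) =: d$. Now replace \emph{both} $u$ and $w$ by twin copies of $v$, i.e., set $N(u) = N(w) = N_H(v)$ in the new graph $H'$ and remove $uw$. Since $v$ is adjacent to neither $u$ nor $w$, these twin neighborhoods omit both $u$ and $w$, so $H'$ is still $K_{r+1}$-free (any clique using $u$ or $w$ lifts to one using $v$). Yet the edges of $H$ incident to $\{u, w\}$ numbered $2d - 1$ (counting $uw$ once), while those of $H'$ incident to $\{u, w\}$ number $2d$, a strict increase of $1$, contradicting extremality.

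Hence $H$ is complete multipartite, and being $K_{r+1}$-free it has at most $r$ parts; merging two parts only decreases the edge count, so it has exactly $r$ parts. Finally, if two parts had sizes differing by more than one, moving a single vertex from the larger to the smaller would strictly increase $|E(H)|$, so all part sizes agree up to one, giving $H = \Tr{n}{r}$ and $G = \cTr{n}{r}$. The main subtlety is the doubled Zykov swap in the transitivity step: the ordinary single-vertex Zykov swap only yields degree equality on non-edges, and a two-vertex swap is needed to extract the strict edge increase that establishes transitivity.
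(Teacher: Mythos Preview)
The paper does not actually prove Tur\'an's Theorem: it is stated as a classical background result and then used throughout, so there is no ``paper's own proof'' to compare against. Your argument via complementation followed by Zykov symmetrization is a standard and correct proof of the clique form, and hence of the complemented form as stated. The two-vertex swap in the transitivity step is carried out correctly (the key point being that $u,w\notin N_H(v)$, so the twin copies are mutually non-adjacent and the edge count at $\{u,w\}$ genuinely rises from $2d-1$ to $2d$), and the balancing step gives uniqueness.

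One small wording issue: the sentence ``merging two parts only decreases the edge count, so it has exactly $r$ parts'' is phrased backwards for what you need. What you want is that if $H$ were complete $s$-partite with $s<r$ (and $n\ge r>s$ forces some part to have at least two vertices), then \emph{splitting} off a vertex into its own part strictly increases the edge count while remaining $K_{r+1}$-free, contradicting extremality. That is the direction that forces $s=r$; the merging observation is its contrapositive but does not by itself yield the conclusion as written.
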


By Tur\'{a}n's Theorem, the unique minimum $(n,k,2)$-graph is $\cTr{n}{k-1}$.
Indeed, the fact that Tur\'{a}n's Theorem (this version) depends on the independence number of a graph inspired the proof of the main theorem.

Arguably the most famous research direction in extremal graph theory is study of Tur\'an numbers, that is, for a fixed graph $F$ and the class $\Forb_n(F)$ of $n$-vertex graphs not containing a copy of $F$, what is the maximum number of edges? Tur\'an's Theorem gives an exact answer when $F$ is complete. Denoting $\chi(F)$ for the chromatic number of $F$, the classical Erd\H{o}s-Stone Theorem \cite{erdos1946structure} gives an asymptotic answer of $(1-1/(\chi(F)-1)) \binom{n}{2}$ for nonbipartite $F$ as $n \rightarrow \infty$, and the bipartite $F$ case is still a very active area of research (see e.g. \cite{bukh2015random}, \cite{furedi2013history}). A more general question considers a family $\F$ of graphs and the collection $\Forb_n(\F)$ of all $n$-vertex graphs not containing any subgraph in $\F$. In this language, an $(n,k,t)$-graph is precisely the complement of a graph in $\Forb_n(\F)$, where $\F$ is the family of $k$-vertex graphs $F$ where $\bar{F}$ does not contain $K_t$. Note that this $\F$ almost always has $\max_{F\in \F} \{\chi(F)\} >2$, meaning the Erd\H{o}s-Stone Theorem determines the correct edge density for large $n$. But Theorem \ref{mainthm} is still interesting for 2 reasons:
\begin{itemize}
    \item The values of $k$ and $t$ are arbitrary, not fixed (and may grow with $n$), and
    \item This result is exact, not asymptotic.
\end{itemize}

There has been some previous work towards the proof of the conjectures.
In \cite{noble2017application}, the Strong $(n,k,t)$-Conjecture was proved for $n \geq k \geq t \geq 3$ and $k \leq 2t-2$, utilizing
an extremal result about vertex covers attributed by Hajnal \cite{hajnal1965theorem} to Erd\H{o}s and Gallai \cite{erdos1961minimal}.
In \cite{Hoffman}, for all $n \geq k \geq t$,
the structure of
minimum $(n,k,t)$-graphs that are disjoint unions of cliques 
was described more precisely as follows.

\begin{theorem} [\cite{Hoffman}] \label{mincliquegraph}
Suppose $t \geq 2$ and $G$ has the minimum number of edges of all $(n,k,t)$-graphs that are a disjoint union of cliques.
Then $G=aK_1+ \cTr{n-a}{b}$, for some $a,b$ satisfying
$$
a+b(t-1)=k-1,
$$
and
$$
b \leq \min \left( \left\lfloor \frac{k-1}{t-1} \right\rfloor, n-k+1) \right).
$$
\end{theorem}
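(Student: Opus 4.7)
The plan is to reformulate the $(n,k,t)$-property for disjoint unions of cliques, and then apply three local smoothing arguments to a minimum such $G$. If $G = K_{n_1} + \cdots + K_{n_m}$, a $k$-vertex subset of $V(G)$ contains a $K_t$ iff at least $t$ of its vertices lie in a common component. Thus $G$ is an $(n,k,t)$-graph iff $\sum_i \min(n_i, t-1) \leq k-1$, the left-hand side being the largest size of a $K_t$-free subset of $V(G)$.

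Now fix a minimum such $G$. First I would show that \emph{no component has size in $\{2, \dots, t-1\}$}: replacing such a $K_{n_i}$ by $n_i$ isolated vertices preserves $\sum_j \min(n_j, t-1)$ (both summands equal $n_i$) while decreasing edges by $\binom{n_i}{2}$. So every component is either $K_1$ or has size $\geq t$; write $a$ for the number of the former (``isolated'') and $b$ for the number of the latter (``big'').

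Second, I would show the sum constraint is \emph{tight}: $a + b(t-1) = k-1$. If instead the sum is at most $k-2$, then one of two moves reduces edges while raising the sum by exactly $1$: if some big clique has size $> t$, split a single vertex off as a new $K_1$, losing $n_i - 1$ edges; otherwise every big clique has size exactly $t$, and shattering one entirely into $t$ isolated vertices loses $\binom{t}{2}$ edges. Third, I would show the big cliques are \emph{balanced} (sizes differing by at most $1$): if two big components had sizes with $n_i - n_j \geq 2$, moving one vertex from the larger to the smaller would keep both big, preserve the sum, and strictly reduce edges by convexity of $\binom{x}{2}$.

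Combining these yields $G = aK_1 + \cTr{n-a}{b}$ with $a + b(t-1) = k-1$; nonnegativity of $a$ is equivalent to $b \leq \lfloor (k-1)/(t-1)\rfloor$, and the requirement $\lfloor (n-a)/b\rfloor \geq t$ (so that the Tur\'an parts are genuinely big), using $n - a = (n-k+1) + b(t-1)$, simplifies to $b \leq n-k+1$. The main delicate point is the tightness step, where the natural ``split off one vertex'' move fails precisely when every big clique has size exactly $t$, forcing the corner-case alternative of shattering a whole clique.
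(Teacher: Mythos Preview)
The paper does not supply its own proof of this theorem; it is quoted from \cite{Hoffman} and used as a black box. So there is nothing in the present paper to compare against. That said, your argument is correct and is essentially the natural one: recast the $(n,k,t)$-condition for a union of cliques as the inequality $\sum_i \min(n_i,t-1)\le k-1$ (this is exactly Lemma~\ref{l.nktrelation}(a)--(b) specialized to unions of cliques), then perform three local improvements. Each of your three moves---shattering an intermediate-size clique, splitting off a vertex (or, in the corner case, shattering a $K_t$), and balancing two big cliques---visibly preserves the inequality while strictly decreasing the edge count, so minimality forces the claimed structure. Your derivation of the two bounds on $b$ from $a\ge 0$ and from $\lfloor (n-a)/b\rfloor\ge t$ (using $n-a=(n-k+1)+b(t-1)$) is clean and correct.

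Two very small remarks. First, your Step~2 and the ``shatter a $K_t$'' branch of Step~3 implicitly use $t\ge 2$ (so that a $K_1$ contributes $1$ to the sum); this is harmless since for $t=1$ the conclusion is immediate. Second, you use tacitly that $b\ge 1$ in a minimum $G$: indeed if $b=0$ then the sum equals $n\ge k>k-1$, contradicting the $(n,k,t)$-property. It would be worth stating this explicitly before invoking the big-clique moves in Step~3.
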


The following example shows Theorem \ref{strongthm} cannot be strengthened to include uniqueness (and in particular the choice of $b$ above need not be unique).

\begin{example} \label{multmin}
Theorem \ref{mincliquegraph} tells us the graphs with the minimum number of edges of all $(10,8,3)$-graphs that are a disjoint union of cliques are among $5K_1+\cTr{5}{1} = 5K_1+K_5$, $3K_1+\cTr{7}{2}=3K_1 + K_3 + K_4$, or $K_1+\cTr{9}{3}=K_1 + 3K_3$ (as given by $b=1,2,3$ in the above).
These graphs have $10$, $9$, and $9$ edges respectively.
So, $3K_1+\cTr{7}{2}=3K_1 + K_3 + K_4$ and $K_1+\cTr{9}{3} =K_1 + 3K_3$ both have the minimum number of edges of all $(10,8,3)$-graphs among disjoint unions of cliques.
\end{example}

Hoffman and Pavlis \cite{HoffPav} have found for any positive integer $N$ there exist some $n \geq k \geq 3$ with
at least $N$ non-isomorphic $(n,k,3)$-graphs which are minimum (among graphs which are disjoint unions of cliques, although Theorem \ref{mainthm} now removes this restriction).
Further, Allen et al. \cite{REU} determined precisely the values of $b$ from Theorem \ref{mincliquegraph} that minimize the number of edges.
The resulting growing families of nonisomorphic extremal constructions may suggest the difficulty of Conjectures \ref{statementweakconjecture} and \ref{strongconj}, but Observation \ref{uniqueindnum} puts such concerns to rest (and may be the reason why the problem still remains tractable).
Notice that the independence numbers of the graphs in Example \ref{multmin} are $6$, $5$, and $4$ respectively.
These lead to Observation \ref{uniqueindnum}, but first we state the following observation that will be used in the proof of Observation \ref{uniqueindnum} and frequently throughout the remainder of the paper.
Let $c(G)$ denote the number of connected components of a graph $G$, and let 
$\alpha(G)$ denote its independence number.

\begin{observation} \label{indnumcomponents}
Suppose $G$ is a disjoint union of cliques. Then $\alpha(G) =c(G)$.
\end{observation}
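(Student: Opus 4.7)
The plan is to prove both inequalities $\alpha(G) \le c(G)$ and $\alpha(G) \ge c(G)$ directly from the definition of a disjoint union of cliques. Write $G = K_{p_1} + K_{p_2} + \cdots + K_{p_c}$, where $c = c(G)$ and each $K_{p_i}$ is one of the clique components (allowing $p_i = 1$ for isolated vertices).

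For the upper bound $\alpha(G) \le c(G)$, I would let $I \subseteq V(G)$ be any independent set and observe that $I$ can contain at most one vertex from each clique component $K_{p_i}$, since any two vertices of $K_{p_i}$ are adjacent in $G$. Therefore $|I| \le c$.

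For the matching lower bound $\alpha(G) \ge c(G)$, I would construct an explicit independent set by selecting one vertex $v_i$ from each component $K_{p_i}$. Since the components are vertex-disjoint and there are no edges between distinct components, the set $\{v_1, \dots, v_c\}$ is independent, yielding $\alpha(G) \ge c$.

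Combining the two bounds gives $\alpha(G) = c(G)$. There is no real obstacle here; the only thing to be slightly careful about is remembering that the convention in the paper allows isolated vertices as cliques of size $1$, but this is harmless since an isolated vertex contributes exactly one to both $\alpha$ and $c$.
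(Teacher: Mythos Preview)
Your proof is correct; the two-inequality argument via ``at most one vertex per clique'' and ``pick one vertex per component'' is exactly the standard justification. The paper itself states this observation without proof, so there is nothing further to compare.
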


The following observation inspired considering the independence number in the main proof.

\begin{observation} \label{uniqueindnum}
Suppose $G_1$ and $G_2$ are non-isomorphic graphs that are both disjoint unions of cliques and are both minimum $(n,k,t)$-graphs. Then $\alpha(G_1) \neq \alpha(G_2)$.
\end{observation}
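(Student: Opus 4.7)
The plan is to invoke Theorem \ref{mincliquegraph} as a parametrization and then perform a short arithmetic argument. By that theorem, any minimum $(n,k,t)$-graph which is a disjoint union of cliques is (up to isomorphism) of the form $G_i = a_i K_1 + \cTr{n-a_i}{b_i}$ for some nonnegative integers $a_i,b_i$ satisfying the linear relation
\[
a_i + b_i(t-1) = k-1.
\]
Since $G_i$ is a disjoint union of cliques with exactly $a_i + b_i$ components (the $a_i$ isolated vertices together with the $b_i$ cliques comprising $\cTr{n-a_i}{b_i}$), Observation \ref{indnumcomponents} yields
\[
\alpha(G_i) = a_i + b_i.
\]

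First I would handle the edge cases $t \in \{1,2\}$ separately. For $t=1$, every $n$-vertex graph is an $(n,k,1)$-graph and the unique minimum is $nK_1$; for $t=2$, Tur\'an's Theorem (Theorem \ref{turan}) yields the unique minimum $(n,k,2)$-graph $\cTr{n}{k-1}$. In both cases there is only one minimum $(n,k,t)$-graph that is a disjoint union of cliques, so the statement is vacuous.

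Next, assume $t \geq 3$. Suppose for contradiction that $\alpha(G_1) = \alpha(G_2)$. Then $a_1 + b_1 = a_2 + b_2$, and from Theorem \ref{mincliquegraph} we also have $a_1 + b_1(t-1) = a_2 + b_2(t-1)$. Subtracting the first equation from the second gives
\[
(t-2)(b_1 - b_2) = 0,
\]
and since $t \geq 3$ this forces $b_1 = b_2$, whence $a_1 = a_2$ as well. Thus $G_1 \cong G_2$, as both are determined up to isomorphism by the pair $(a_i,b_i)$. Taking the contrapositive delivers the observation.

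There is no real obstacle here: the argument is essentially a two-variable linear system, with the key conceptual input being that the parameter $b$ in Theorem \ref{mincliquegraph} can be recovered from $\alpha(G)$ via $b = (k-1-\alpha(G))/(t-2)$. The only thing to be careful about is to quote the correct characterization from Theorem \ref{mincliquegraph} so that both the isomorphism type and the independence number of the graph are captured by the single pair $(a,b)$, together with Observation \ref{indnumcomponents} to translate component counts into independence numbers.
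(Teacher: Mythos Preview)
Your proof is correct and follows essentially the same route as the paper: invoke Theorem~\ref{mincliquegraph} to parametrize each $G_i$ by a pair $(a_i,b_i)$, use Observation~\ref{indnumcomponents} to identify $\alpha(G_i)=a_i+b_i$, and then solve the resulting two-equation linear system when $t\geq 3$. The only cosmetic difference is that you separate out the $t=1$ case explicitly, whereas the paper jumps straight to $t=2$ and $t>2$.
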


\begin{proof}
If $t=2$, then $G_1=G_2$ by uniqueness in Tur\'{a}n's Theorem.
So assume $t>2$.
For a contradiction, suppose $\alpha(G_1) = \alpha(G_2)$.
By Theorem \ref{mincliquegraph}, there exist positive integers $a_1, b_1, a_2, b_2$, such that $G_i = a_iK_1+ \cTr{n-a_{i}}{b_i}$ and $a_i+b_i(t-1)=k-1$ (for $i=1,2$).
Thus, $a_1+b_1(t-1)=a_2+b_2(t-1)$.
Also, by Observation \ref{indnumcomponents}, $a_1+b_1=a_2+b_2$.
Hence
$b_1-b_1(t-1)=b_2-b_2(t-1)$.
Combining these and noting $t > 2$ gives $b_1=b_2$
and $a_1=a_2$.
\end{proof}

\section{The Proof}

We collect together some preliminary lemmas. Firstly,
because we will use induction on $t$ in Theorem \ref{mainthm}, we observe that $(n,k,t)$-graphs contain $(n',k',t')$-graphs for certain smaller values of $n'$, $k'$, and $t'$.
Given a set $X \subseteq V(G)$, let $G[X]$ denote the subgraph of $G$ induced by the vertices in $X$ and let $G-X = G[V(G) \setminus X]$.

\begin{lemma} \label{minusset}
If $G$ is an $(n,k,t)$-graph and $S \subseteq V(G)$ is an independent set, 
then $G-S$ is an $(n- \vert S \vert, k - \vert S \vert, t-1)$-graph.
\end{lemma}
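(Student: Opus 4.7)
The plan is to verify the $(n-|S|, k-|S|, t-1)$-property of $G-S$ directly from the definition. Take any vertex subset $W \subseteq V(G) \setminus S$ with $|W| = k - |S|$; I must find a clique of size $t-1$ inside $(G-S)[W]$.

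First I would form the set $W \cup S$ inside $V(G)$. Since $W$ and $S$ are disjoint by construction, this has exactly $k$ vertices. Applying the hypothesis that $G$ is an $(n,k,t)$-graph to this $k$-subset yields a clique $C$ on $t$ vertices inside $G[W \cup S]$. The key observation is then that $S$ is independent, so $|C \cap S| \le 1$: two vertices of $S$ could not both lie in a clique. Hence $C \setminus S$ is still a clique, of size at least $t-1$, and lies entirely inside $W \subseteq V(G) \setminus S$. This produces the desired $K_{t-1}$ inside $(G-S)[W]$.

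To tie up the statement, I would also check the parameter bounds that are implicit in the definition of an $(n',k',t')$-graph, namely $n' \geq k' \geq t'$. Here $n - |S| \geq k - |S|$ follows from $n \geq k$, and $k - |S| \geq t - 1$ needs a brief justification: we may assume $|S| \le k - t + 1$, since otherwise the statement $|W| = k - |S|$ would be vacuous or the requirement trivially met (an independent set larger than $k - t + 1$ cannot be extended to $k$ vertices without violating the $(n,k,t)$ property unless $n < k$, which is excluded). I would spell this out in one sentence rather than as a separate case.

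There is no serious obstacle here; the argument is a one-step application of the definition, with the independence of $S$ being used only to bound $|C \cap S| \le 1$. The main thing to be careful about is the disjointness $W \cap S = \emptyset$ so that $|W \cup S| = k$ exactly (not less), which is why I would phrase $W$ as a subset of $V(G) \setminus S$ from the outset.
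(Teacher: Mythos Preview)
Your argument is correct and essentially identical to the paper's proof: take a $(k-|S|)$-subset of $V(G-S)$, adjoin $S$ to get a $k$-set, extract a $K_t$, and observe that at most one vertex of this clique lies in the independent set $S$. The paper does not verify the parameter bound $k-|S|\ge t-1$ within this lemma (it follows from the separate Lemma~\ref{maxalpha}, which says $\alpha(G)\le k-t+1$), so your extra paragraph is cautious but unnecessary.
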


\begin{proof}
Clearly $\vert V(G-S) \vert = n - \vert S \vert$.
Let $X$ be a subset of $V(G-S)$ with $\vert X \vert = k- \vert S \vert$.
Because $G$ is an $(n,k,t)$-graph, $G[S \cup X]$ contains a $K_t$.
Because $S$ is an independent set, at most $1$ vertex in this $K_t$ was from $S$.
Thus, $(G-S)[X]$ must contain a $K_{t-1}$.
\end{proof}

Because Theorem \ref{mainthm} considers the independence number of $(n,k,t)$-graphs, we now 
enrich the $(n,k,t)$ notation to include the independence number.

\begin{definition}
A graph $G$ is an $(n,k,t,r)$-\emph{graph} if $G$ is an $(n,k,t)$-graph, and $\alpha(G)=r$.
A \emph{minimum} $(n,k,t,r)$-graph is an $(n,k,t,r)$-graph with the minimum number of edges among all $(n,k,t,r)$-graphs.
\end{definition}

The following lemma determines an upper bound for the independence number of an $(n,k,t)$-graph.

\begin{lemma} \label{maxalpha}
If $G$ is an $(n,k,t)$-graph, then $\alpha(G) < k-t+2$.
\end{lemma}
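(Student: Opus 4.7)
The plan is to prove the bound by contradiction. Suppose $G$ is an $(n,k,t)$-graph with $\alpha(G) \geq k-t+2$, and derive a violation of the $(n,k,t)$ property by constructing an induced $k$-vertex subgraph free of $K_t$.

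First I would pick an independent set $I \subseteq V(G)$ of size exactly $k-t+2$. Since $n \geq k$, we have $|V(G) \setminus I| \geq k-(k-t+2) = t-2 \geq 0$, so we may choose any $t-2$ vertices from $V(G) \setminus I$ and let $X$ be the union of $I$ with these $t-2$ vertices, giving $|X|=k$.

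Next I would argue $G[X]$ contains no clique on $t$ vertices. Any such clique can contain at most one vertex from the independent set $I$, so it contains at least $t-1$ vertices from $X \setminus I$. But $|X \setminus I| = t-2$, a contradiction. Hence $G[X]$ is $K_t$-free, contradicting the assumption that $G$ is an $(n,k,t)$-graph.

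There is no real obstacle here: the only subtlety is confirming that the set $V(G)\setminus I$ actually has at least $t-2$ vertices to spare, which follows directly from $n \geq k$. The lemma is really a packaging of the elementary pigeonhole observation that a clique and an independent set can overlap in at most one vertex, and it will feed into Theorem \ref{mainthm} as the natural upper range for $r = \alpha(G)$ that must be considered.
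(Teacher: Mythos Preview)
Your proof is correct and essentially identical to the paper's own argument: both suppose an independent set of size $k-t+2$, extend it by $t-2$ additional vertices to a $k$-set $X$, and observe that any $K_t$ in $G[X]$ would need two vertices from the independent set. The only cosmetic difference is that the paper phrases the contradiction as ``a $K_t$ exists and must contain two independent vertices,'' while you phrase it as ``no $K_t$ can exist since $|X\setminus I|=t-2$''; these are the same observation.
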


\begin{proof}
For a contradiction, suppose $G$ has an independent set, call it $S$, of size $k-t+2$.
Let $X'$ be any $t-2$ vertices in $G-S$ (this is possible because $\vert V(G-S) \vert = n- (k-t+2) = (n-k) + t-2 \geq t-2$).
Define $X \coloneqq S \cup X'$, so $\vert X \vert = k$.
Then there exists a $K_t$ in $G[X]$ which necessarily contains $2$ vertices in $S$.
This is a contradiction because $S$ is independent.
Thus $\alpha(G) < k-t+2$.
\end{proof}

But, subject to this bound, all independence numbers $\alpha(G)$ are attainable. 
Indeed, for any $1\leq r \leq k-t+1$ the graph $(r-1)K_1 + K_{n-r+1}$ is an example of an $(n,k,t,r)$-graph because every set of $k$ vertices contains at most $k-t$ isolated vertices and at least $t$ vertices of the clique.




Below is an observation about minimum $(n,k,t)$-graphs.
It will be used in in the proof of Theorem \ref{mainthm}.

\begin{lemma} \label{notkminus1}
Suppose $k>t$
and let $a \leq n$ be a positive integer.
Let $H$ be an $(n,k,t)$-graph with the minimum number of edges among $(n,k,t)$-graphs with independence number at most $a$.
If $\alpha(H) <a$, then $H$ is not an $(n,k-1,t)$-graph.
\end{lemma}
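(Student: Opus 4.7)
The plan is to argue by contradiction via a single edge deletion. Assume $H$ is an $(n,k-1,t)$-graph with $\alpha(H)<a$; I will produce a graph $H'$ with strictly fewer edges that still satisfies both constraints (being an $(n,k,t)$-graph and having independence number at most $a$), contradicting the minimality of $H$.

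First, the hypothesis $\alpha(H)<a\le n$ rules out the edgeless graph on $n$ vertices, so $H$ contains at least one edge $uv$; set $H' \coloneqq H-uv$. To check that $H'$ is still an $(n,k,t)$-graph, take an arbitrary $k$-subset $S$ of $V(H)$. If $\{u,v\}\cap S=\emptyset$ then $H'[S]=H[S]$, which already contains a $K_t$ because $H$ is an $(n,k,t)$-graph. Otherwise, after possibly swapping $u$ and $v$, we may assume $u\in S$; then $S\setminus\{u\}$ has size $k-1$, and the stronger $(n,k-1,t)$-hypothesis on $H$ supplies a $K_t$ inside $H[S\setminus\{u\}]$. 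This $K_t$ uses neither $u$ nor the deleted edge $uv$, so it persists in $H'[S]$.

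For the independence number, note that edge deletion increases $\alpha$ by at most $1$: any independent set of $H'$ that is not already independent in $H$ must contain both endpoints of $uv$, and discarding one endpoint recovers an independent set of $H$. Hence $\alpha(H') \le \alpha(H)+1 \le (a-1)+1 = a$, so the bound on the independence number is preserved.

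Since $H'$ satisfies both constraints and has exactly one fewer edge than $H$, this contradicts the minimality of $H$, completing the proof. There is no real obstacle; each hypothesis is used in one dedicated place: the $(n,k-1,t)$-strengthening is what allows an \emph{arbitrary} edge to be deleted while preserving the $K_t$-condition on $k$-subsets, and the strict slack $\alpha(H)<a$ is what absorbs the possible unit jump in independence number caused by that deletion.
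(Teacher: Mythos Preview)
Your proof is correct and follows essentially the same approach as the paper: delete a single edge (which exists since $\alpha(H)<a\le n$ forces $H$ to be non-edgeless), use the $(n,k-1,t)$-property to guarantee a $K_t$ in every $k$-set avoiding one endpoint of the deleted edge, and absorb the possible unit increase in $\alpha$ using the strict slack $\alpha(H)<a$. The only cosmetic difference is your case split on whether $\{u,v\}$ meets the $k$-set versus the paper's split on whether a fixed endpoint lies in it.
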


\begin{proof}
Among all $(n,k,t)$-graphs with independence number at most $a$, suppose $H$ has the fewest edges.
For a contradiction suppose $\alpha(H)<a$ and $H$ is an $(n,k-1,t)$-graph.
Because $\alpha(H)<a \leq n$, there exists an edge, $e$ in $H$.
Let $H^-$ be the graph formed from $H$ by deleting $e$ and let 
$v \in e$.
Let $X$ be a subset of $V(H^-)$ with $\vert X \vert = k$.
If $X$ does not contain $v$, then $H^-[X]=H[X]$ contains a $K_t$.
So, suppose $X$ contains $v$.
Then $\vert X \setminus \{v\} \vert = k-1$.
Because $H$ is an $(n, k-1, t)$-graph, $H[X \setminus \{v\}]=H^-[X \setminus \{v\}]$ contains a $K_t$.
So $H^-[X]$ contains a $K_t$.
Also, $H^-$ has independence number at most $1$ greater than $H$.
Thus, $H^-$ is an $(n,k,t)$-graph with fewer edges than $H$ and $\alpha(H^-) \leq a$, contradicting minimality of $H$.
\end{proof}

We caution that Lemma \ref{notkminus1} is not necessarily true when $\alpha(H)=a$.
For example, the minimum $(8,8,4)$-graph, $H$, with independence number $\alpha(H) \leq a:= 2$ is $K_4 + K_4$ because the minimum graph with independence number $2$ is $K_4+K_4$ by Mantel's Theorem (rephrased in terms of graph complements) and $K_4+K_4$ is an $(8,8,4)$-graph (because it contains $K_4$).
Despite this, $K_4 + K_4$ is also an $(8,7,4)$-graph by the pigeonhole principle.

Finally, since we will be using the $(n,k,t)$ condition to build the desired disjoint union of cliques, it is useful to keep track of the size of their largest $K_t$-free subgraph.

\begin{lemma}\label{l.nktrelation}
Suppose $\Gamma$ is a graph which is a disjoint union of cliques. Denote by $A_\Gamma$ the subgraph consisting of components with $<t$ vertices, and $B_\Gamma$ the subgraph of components with $\geq t$ vertices (so that $\Gamma=A_\Gamma + B_\Gamma$). 
Then:
\begin{enumerate}[a)]
\item the largest $K_t$-free subgraph of $\Gamma$ has $(t-1)c(B_\Gamma) +|V(A_\Gamma)|$ vertices, 
    \item if also $\Gamma$ is an $(n,k,t)$-graph, then \[
k-1 \geq (t-1) c(B_\Gamma) +|V(A_\Gamma)|, \hspace{5mm} \text{ and}
\]
\item if furthermore $\Gamma$ is \emph{not} an $(n,k-1,t)$-graph, the above inequality is an equality.
\end{enumerate}
\end{lemma}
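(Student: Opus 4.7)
The plan is to handle the three parts in order, each following quickly from the previous.

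For part (a), I would decompose $\Gamma$ into its clique components and analyze which vertex subsets $X$ yield a $K_t$-free induced subgraph $\Gamma[X]$. Since each component $C$ is a clique, $X \cap C$ is also a clique, and $\Gamma[X]$ is $K_t$-free if and only if $|X \cap C| \leq t-1$ for every component $C$. For $C \in A_\Gamma$ this bound is implied by $|X \cap C| \leq |C| < t$, while for $C \in B_\Gamma$ the bound $|X\cap C|\leq t-1$ is genuinely restrictive. Summing over components gives $|X| \leq |V(A_\Gamma)| + (t-1)c(B_\Gamma)$, and this is achieved by taking $X = V(A_\Gamma) \cup Y$, where $Y$ consists of any $t-1$ vertices from each component of $B_\Gamma$.

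For part (b), I would reinterpret the $(n,k,t)$ condition: $\Gamma$ is an $(n,k,t)$-graph precisely when no $k$-vertex induced subgraph is $K_t$-free. Since any induced subsubgraph of a $K_t$-free graph is $K_t$-free, this is equivalent to saying the largest $K_t$-free induced subgraph of $\Gamma$ has at most $k-1$ vertices. Combining with part (a), we obtain $(t-1)c(B_\Gamma) + |V(A_\Gamma)| \leq k-1$.

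For part (c), I would use the contrapositive observation: if $\Gamma$ is not an $(n,k-1,t)$-graph, then there exists some $X \subseteq V(\Gamma)$ with $|X| = k-1$ such that $\Gamma[X]$ is $K_t$-free, giving a $K_t$-free induced subgraph of size $k-1$. By part (a), this forces $(t-1)c(B_\Gamma) + |V(A_\Gamma)| \geq k-1$, which combined with part (b) produces equality.

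I do not anticipate a real obstacle; the whole lemma is essentially a translation between the $(n,k,t)$ hypothesis and the obvious counting on disjoint unions of cliques. The only place that requires any care is verifying in part (a) that the bound is not only an upper bound but is attained, so that parts (b) and (c) can be converted into sharp inequalities rather than just one-sided estimates.
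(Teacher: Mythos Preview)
Your proposal is correct and follows essentially the same approach as the paper: identify the largest $K_t$-free induced subgraph as $V(A_\Gamma)$ together with $t-1$ vertices from each clique of $B_\Gamma$, then read off (b) and (c) directly from the $(n,k,t)$ and non-$(n,k-1,t)$ hypotheses. Your write-up is in fact more careful than the paper's in justifying that the expression in (a) is both an upper bound and attained.
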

\begin{proof}
The largest subgraph $F$ of $\Gamma$ with no $K_t$ is obtained by starting with $A_{\Gamma}$ and adding $t-1$ vertices from each clique of $B_{\Gamma}$. So $|V(F)|=(t-1)c(B_\Gamma)+|V(A_\Gamma)|$. If $\Gamma$ is an $(n,k,t)$-graph, then $F$ has at most $k-1$ vertices in total. If $\Gamma$ is not an $(n,k-1,t)$-graph, then there is some $(k-1)$-set of vertices without a $K_t$, and as $F$ is the largest, it follows $|V(F)| \geq k-1$ so we have equality.
\end{proof}

We now proceed with the proof of the main theorem.

\begin{theorem} \label{mainthm}

Every minimum $(n,k,t,r)$-graph is a disjoint union of cliques.
\end{theorem}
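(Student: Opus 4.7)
The plan is to proceed by induction on $t$. The base $t=2$ is Tur\'an's theorem (Theorem \ref{turan}): a minimum $(n,k,2,r)$-graph is $\cTr{n}{r}$, which is already a disjoint union of cliques.

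For the inductive step, let $G$ be a minimum $(n,k,t,r)$-graph with $t\geq 3$. The case $r=1$ is immediate, since $\alpha(G)=1$ forces $G=K_n$, so assume $r\geq 2$ and fix a maximum independent set $S\subseteq V(G)$ with $|S|=r$. By Lemma \ref{minusset}, $H:=G-S$ is an $(n-r,k-r,t-1)$-graph, and since every independent set of $H$ is one of $G$, we have $\alpha(H)=:r'\leq r$.

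The heart of the argument is to show that $H$ is itself a \emph{minimum} $(n-r,k-r,t-1,r')$-graph. Granting this, the inductive hypothesis gives $H=C_1+\cdots+C_{r'}$ as a disjoint union of cliques. One then analyses the edges between $S$ and $V(H)$: local edge-swap arguments, governed by Lemmas \ref{notkminus1} and \ref{l.nktrelation} (to control when a swap preserves the $(n,k,t)$-property and the value of $\alpha$) and exploiting the minimality of $G$, should force every $v\in V(H)$ to have exactly one neighbor in $S$, vertices of the same $C_i$ to share a common $S$-neighbor, and distinct $C_i$'s to attach to distinct vertices of $S$. The resulting $G$ is a disjoint union of cliques: the $r'$ enlarged cliques $\{s_i\}\cup C_i$ together with $r-r'$ isolated vertices coming from $S$.

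The main obstacle is the contradiction step that shows $H$ is minimum. The natural plan is, given a hypothetical $(n-r,k-r,t-1,r')$-graph $H'$ with fewer edges than $H$, to build a graph $G^\star$ on $V(G)$ by replacing the induced subgraph on $V(H)$ with $H'$ and choosing $S$-to-$V(H)$ edges that preserve both $\alpha=r$ and the $(n,k,t)$-property. The delicate point is that $k$-subsets of $V(G^\star)$ contained entirely in $V(H)$ must still contain a $K_t$, whereas the $(n-r,k-r,t-1)$ property of $H'$ only yields $K_{t-1}$ on $(k-r)$-subsets. Overcoming this will require using extra information $H$ inherits from $G$ (for instance, $H$ is also an $(n-r,k,t)$-graph), and choosing the $S$-to-$V(H)$ edges so that $k$-subsets meeting $S$ still get their $K_t$ from the combination of a $K_{t-1}$ in $V(H')$ and a common $S$-neighbor. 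Managing all these constraints simultaneously while strictly reducing the edge count is where the main technical work will lie.
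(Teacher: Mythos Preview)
Your overall architecture (induction on $t$, delete a maximum independent set $S$, reduce to an $(n-r,k-r,t-1)$-problem on $H=G-S$, then reconstruct $G$ from the clique structure of $H$) is exactly the paper's. The endgame you sketch---each $v\in V(H)$ has a unique $S$-neighbour, vertices in the same component of $H$ share that neighbour, distinct components attach to distinct $S$-vertices---is likewise what the paper proves as $(\diamondsuit)$ and $(\heartsuit)$.

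The genuine gap is in your ``heart of the argument'': the plan to show $H$ is minimum by replacing it with a hypothetical better $H'$ inside $G$. You yourself identify the obstruction, and it is fatal as stated. If $H'$ is only assumed to be an $(n-r,k-r,t-1,r')$-graph, then for $k$-subsets $X\subseteq V(H')$ (which exist whenever $n-r\geq k$) nothing forces $G^\star[X]$ to contain a $K_t$: the $(k-r,t-1)$-property gives a $K_{t-1}$ on any $(k-r)$-subset, not a $K_t$ on a $k$-subset, and ``extra information $H$ inherits from $G$'' is information about $H$, not about $H'$. There is no evident way to rig the $S$--$V(H')$ edges to repair this, since those edges are irrelevant to $X\subseteq V(H')$.

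The paper avoids this by a \emph{comparison} rather than a \emph{replacement} argument. It chooses, externally, any minimum $(n-r,k-r,t-1)$-graph $H$ with $\alpha(H)\leq r$; by induction $H$ is a disjoint union of cliques. It then builds a brand-new graph $G'$ by adjoining one fresh vertex to each clique of $H$ and adding $r-c(H)$ isolated vertices. Because $H$ is already a disjoint union of cliques, one can verify directly (pigeonhole, using Lemma~\ref{l.nktrelation}(b) for $H$) that $G'$ is an $(n,k,t,r)$-graph. The edge comparison is then soft: $|E(G-S)|\geq |E(H)|$ by minimality of $H$, and every vertex of $G-S$ has at least one $S$-neighbour by maximality of $S$ while in $G'$ each $H$-vertex has exactly one new neighbour; summing gives $|E(G)|\geq |E(G')|$. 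Minimality of $G$ forces equality in both places simultaneously, so a posteriori $G-S$ is itself minimum (hence a disjoint union of cliques by induction) and every vertex of $G-S$ has exactly one $S$-neighbour. From there your $(\diamondsuit)/(\heartsuit)$ outline goes through, with Lemma~\ref{notkminus1} and Lemma~\ref{l.nktrelation}(c) used to pin down $|V(A_{G-S})|+(t-2)c(B_{G-S})=k-r-1$ and thereby build a bad $k$-set when $(\heartsuit)$ fails.

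Minor point: the paper takes $t=1$ (not $t=2$) as the base case; this is harmless.
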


\begin{proof}
The proof proceeds by induction on $t$.
If $t=1$, then every graph on $n$ vertices is an $(n,k,1)$-graph because all sets of $k$ vertices contain a $K_1$.
Thus, the unique minimum $(n,k,1,r)$-graph is $\cTr{n}{r}$ by Tur\'{a}n's Theorem.
Else, $t \geq 2$.

Suppose $r \geq k-t+2$.
By Lemma \ref{maxalpha}, there does not exist an $(n,k,t,r)$-graph.
So the theorem holds vacuously.

Next suppose $r < \frac{k}{t-1}$.
By Tur\'{a}n's Theorem, $\cTr{n}{r}$ is the unique graph with independence number $r$ and the minimum number of edges.
So it suffices to check that $\cTr{n}{r}$ is genuinely an $(n,k,t)$-graph.
Let $X$ be a subset of $V(\cTr{n}{r})$ with $\vert X \vert = k$.
By the pigeonhole principle, $X$ contains at least $\frac{k}{r} > t-1$ vertices from one connected component of $\cTr{n}{r}$.
Because all connected components of $\cTr{n}{r}$ are cliques, $\cTr{n}{r}[X]$ contains a $K_t$.
Therefore $\cTr{n}{r}$ is the unique minimum $(n,k,t,r)$-graph.

So suppose instead $\frac{k}{t-1} \leq r < k-t+2$.
Let $G$ be an $(n,k,t,r)$-graph.
We first construct an $(n,k,t,r)$-graph $G'$ that is a disjoint union of cliques with $E(G) \geq E(G')$.
Let $S$ be an independent set in $G$ with $\vert S \vert = r$.
Then $\alpha(G-S) \leq \alpha(G) = r$.
By Lemma \ref{minusset}, $G-S$ must be an $(n-r, k-r, t-1)$-graph.
Let $H$ be any minimum $(n-r, k-r, t-1)$-graph with $\alpha(H) \leq r$,
so $\vert E(G-S) \vert \geq \vert E(H) \vert$.
By the induction hypothesis, $H$ is a disjoint union of cliques with $\alpha(H) \leq r$.
So, by Observation \ref{indnumcomponents}, $c(H) \leq r$.
Let $G'$ be the graph formed by increasing the size of each clique component of $G$ by one and adding $r-c(H)$ new isolated vertices
\begin{figure}[]
    \centering
    \begin{tabular}{cc}
    \begin{tikzpicture}[scale=0.6]
    
    \filldraw[] (0,4) circle [radius=0.125];
    \draw[very thick] (0,4) -- (-0.25,0.5);
    \draw[very thick] (0,4) -- (0.25,0.5);
    
    \filldraw[] (1,4) circle [radius=0.125];
    \draw[very thick] (1,4) -- (0.25,0.5);
    \draw[very thick] (1,4) -- (0.75,0.5);
    \draw[very thick] (1,4) -- (1.25,0.5);
    \draw[very thick] (1,4) -- (1.75,0.5);

    \filldraw[] (2,4) circle [radius=0.125];
    \draw[very thick] (2,4) -- (2,0.5);
    
    \filldraw[] (3,4) circle [radius=0.125];
    
    \filldraw[] (4,4) circle [radius=0.125];
    \draw[very thick] (4,4) -- (3.5,0.5);
    \draw[very thick] (4,4) -- (4.5,0.5);
    
    \filldraw[] (5,4) circle [radius=0.125];
    \draw[very thick] (5,4) -- (4.5,0.5);
    \draw[very thick] (5,4) -- (5.5,0.5);
    
    \filldraw[] (6,4) circle [radius=0.125];
    
    \filldraw[] (7,4) circle [radius=0.125];
    \draw[very thick] (7,4) -- (6.5,0.5);
    \draw[very thick] (7,4) -- (7,0.5);
    \draw[very thick] (7,4) -- (7.5,0.5);
    
    \filldraw[] (8,4) circle [radius=0.125];
    \draw[very thick] (8,4) -- (8,0.5);
    
    \draw [thick, decorate, 
    decoration = {brace, raise=2, amplitude=10}] (-0.25,4.4) --  (8.25,4.4);
    \node at (4,5.5) {$\vert S \vert =r$};
    
    \filldraw [gray!20, fill=gray!20] (-0.5,-3.9) rectangle (9,2);
    \node at (4,0) {Some};
    \node at (4,-0.75) {$(n-r, k-r, t-1)$-graph};
    
    \end{tikzpicture}
    
    &
    
    \begin{tikzpicture}[scale=0.6]
    \filldraw[] (0.25,4) circle [radius=0.125];
    \filldraw[] (0.75,4) circle [radius=0.125];
    \filldraw[] (1.25,4) circle [radius=0.125];
    \filldraw[] (1.75,4) circle [radius=0.125];
    \filldraw[] (2.25,4) circle [radius=0.125];
    \filldraw[] (3.25,4) circle [radius=0.125];
    \filldraw[] (3.75,4) circle [radius=0.125];
    \filldraw[] (5.6,4) circle [radius=0.125];
    \filldraw[] (7.4,4) circle [radius=0.125];
    
    \draw [thick, decorate, 
    decoration = {brace, raise=2, amplitude=10}] (3,4.4) --  (7.65,4.4);
    \node at (5.325,5.5) {$c(H)$};
    \draw [thick, decorate, 
    decoration = {brace, raise=2, amplitude=10}] (0,4.4) --  (2.5,4.4);
    \node at (1.6,5.5) {$r-c(H)$};
    
    \filldraw [gray!20, fill=gray!20] (-0.5,-3.9) rectangle (9,2);
    \node at (4,-3.2) {$H$};
    
    \draw[very thick] (3.25,4) -- (0.5,0.7);
    \draw[very thick] (3.25,4) -- (1,0.7);
    \draw[very thick] (3.75,4) -- (1.7,0.7);
    \draw[very thick] (3.75,4) -- (2.1,0.7);
    
    \draw[very thick] (5.6,4) -- (4.5,1.1);
    \draw[very thick] (5.6,4) -- (5,1.1);
    \draw[very thick] (5.6,4) -- (5.5,1.1);
    \draw[very thick] (7.4,4) -- (6.75,1.1);
    \draw[very thick] (7.4,4) -- (7.25,1.1);
    \draw[very thick] (7.4,4) -- (7.75,1.1);
    
    \draw[very thick, fill=gray!60] \boundellipse{0.75,0}{0.5}{0.8};
    \draw[very thick, fill=gray!60] \boundellipse{2,0}{0.5}{0.8};
    \draw[very thick, fill=gray!60]
    \boundellipse{5,0}{1}{1.25};
    \draw[very thick, fill=gray!60]
    \boundellipse{7.25,0}{1}{1.25};
    
    \draw[dashed] (0,1) -- (0,-1.75);
    \draw[dashed] (0,-1.75) -- (2.75,-1.75);
    \draw[dashed] (2.75,-1.75) -- (2.75,1);
    \draw[dashed] (0,1) -- (2.75,1);
    \node at (1.375,-1.25) {$A_H$};
    \draw[] (-0.25,1.45) -- (-0.25,-2.5);
    \draw[] (-0.25,-2.5) -- (3.15,-2.5);
    \draw[] (3.15,-2.5) -- (3.15,1.45);
    \node at (1.2,2.95) {$A_{G'}$};
    \draw[] (-0.25,1.45) -- (-0.25,3.75);
    \draw[] (-0.25,3.75) -- (-0.25, 4.25);
    \draw[] (-0.25, 4.25) -- (4.25, 4.25);
    \draw[] (4.25, 4.25) -- (4.25, 3.75);
    \draw[] (4.25, 3.75) -- (3.15,1.45);
    
    \draw[dashed] (3.75,1.45) -- (3.75,-1.75);
    \draw[dashed] (3.75,-1.75) -- (8.5,-1.75);
    \draw[dashed] (8.5,-1.75) -- (8.5,1.45);
    \draw[dashed] (3.75,1.45) -- (8.5,1.45);
    \node at (6.325,-1.25) {$B_H$};
    \draw[] (3.5,1.45) -- (3.5,-2.5);
    \draw[] (3.5,-2.5) -- (8.75,-2.5);
    \draw[] (8.75,-2.5) -- (8.75,1.45);
    \node at (6.4,2.95) {$B_{G'}$};
    \draw[] (3.5,1.45) -- (4.55,3.75);
    \draw[] (4.55,3.75) -- (4.55, 4.25);
    \draw[] (4.55, 4.25) -- (8.75, 4.25);
    \draw[] (8.75, 4.25) -- (8.75, 3.75);
    \draw[] (8.75, 3.75) -- (8.75,1.45);
    
    \end{tikzpicture}\\
    $G$ & $G'$\\
\end{tabular}

\caption{}
\label{Gdef}
\end{figure}
(see Figure \ref{Gdef}).
Thus, $c(G')= c(H) + (r - c(H)) = r$.
So, $G'$ is also a disjoint union of cliques and, by Observation \ref{indnumcomponents}, $\alpha(G')=r$.
Each vertex in $V(G-S)$ must be adjacent to at least one vertex in $S$, else there is an independent set in $G$ with more than $r$ vertices. Also, by construction of $G'$, each vertex in $H$ has exactly $1$ edge to the vertices in $G'-H$. 
So, if $E(v,W)$ denotes the set of edges between $v$ and $W$, then
\begin{equation*}
\begin{split}
\vert E(G) \vert & = \vert E(G-S) \vert + \sum_{v \in V(G-S)} \vert E(v,S) \vert\\
& \geq \vert E(H) \vert + \sum_{v \in V(G-S)} 1 \\
& = \vert E(H) \vert + \sum_{v \in V(H)} \vert E(v, V(G'-H)) \vert \\
& = \vert E(G') \vert.\\
\end{split}
\end{equation*}
That is to say,
\begin{equation} \label{edgesGG'}
\vert E(G) \vert \geq  \vert E(G') \vert.
\end{equation}
We now show $G'$ is an $(n,k,t,r)$-graph. 
First, let $A_{H}$ be the subgraph of $H$ consisting of connected components (necessarily cliques) with strictly fewer than $t-1$ vertices and $B_{H}$ be the subgraph of $H$ consisting of components
with at least $t-1$ vertices.
Lemma \ref{l.nktrelation} (b) applied to 
the $(n-r,k-r,t-1)$-graph $H$ 
gives
\begin{equation} \label{Hfact}
    k-r > (t-2)c(B_H) +\vert V(A_H) \vert.
\end{equation}
Now let $A_{G'}$ be the subgraph of $G'$ consisting of connected components (by construction of $G'$, necessarily cliques) with strictly fewer than $t$ vertices and $B_{G'}$ be the subgraph of $G'$ consisting of components (necessarily cliques)
with at least $t$ vertices.
Then:
\begin{itemize}
    \item $c(B_{G'})=c(B_H)$, as every clique of size $\geq t-1$ in $H$ has become a clique of size $\geq t$ in $G'$ (by construction of $G'$); and
    \item $\vert V(A_{G'}) \vert = \vert V(A_H) \vert + r - c(B_H)$, as every clique of size $< t-1$ in $H$ has become a clique of size $<t$ in $G'$, and only $c(B_H)$ of the $r$ vertices in $G'-H$ are added to the larger $B_{H}$ components.
\end{itemize}

Let $X$ be a subset of $V(G')$ with $\vert X \vert = k$.
By definition of $G'$, 
 at most $\vert V(A_{G'}) \vert = \vert V(A_H) \vert + r - c(B_H)$ vertices in $X$ are in components with fewer than $t$ vertices.
Thus, by the pigeonhole principle, some component of $B_{G'}$ contains at least $\frac{k- \vert V(A_{G'}) \vert}{c(B_{G'})}$ vertices from $X$.
Plus, we can lower bound
\begin{equation*}
\begin{aligned}
    \frac{k- \vert V(A_{G'}) \vert}{c(B_{G'})} & = \frac{ k-(\vert V(A_H) \vert + r - c(B_H))}{c(B_H)} & \\
    & = \frac{ k-\vert V(A_H) \vert -r}{c(B_H)} +1 & \\
    & > (t-2) + 1 & \text{(By \eqref{Hfact})}\\
    & =t-1. & \\
\end{aligned}
\end{equation*}
Thus, $G'[X]$ contains a $K_t$.
This proves $G'$ is an $(n,k,t,r)$-graph.
So, we have found an $(n,k,t,r)$-graph $G'$ which is a disjoint union of cliques and with $\vert E(G) \vert \geq \vert E(G') \vert$
(as needed for the Weak $(n,k,t)$-conjecture).

Now, suppose $G$ is also a minimum $(n,k,t,r)$-graph.
So the inequality in \eqref{edgesGG'} is actually an equality.
Thus, each vertex $v \in G-S$ is adjacent to exactly one vertex in $S$.
Also,  $\vert E(G-S) \vert = \vert E(H) \vert$, so $G-S$ is a minimum $(n-r, k-r, t-1)$-graph with independence number at most $r$ and is therefore also a disjoint union of cliques by the induction hypothesis.

We show $G$ must also be a disjoint union of cliques. This is implied by the following two additional facts.

\begin{itemize}
\item[$(\diamondsuit)$] If $u$,$v$ are two vertices in different components of $G-S$, then they cannot be adjacent to the same vertex $w \in S$.
\item[$(\heartsuit)$] If $u$,$v$ are two vertices in the same components of $G-S$, then they are adjacent to the same vertex in $S$.
\end{itemize}
 
To prove $(\diamondsuit)$, suppose for a contradiction that $u$ and $v$ are two vertices in the different components of $G-S$ that are adjacent to the same vertex $w \in S$.
Recall that $u$ and $v$ are not adjacent to any other vertices in $S$.
So, $S \setminus \{w\} \cup \{u,v\}$ is an independent set in $G$ with $r+1$ vertices. This is a contradiction.

We now prove $(\heartsuit)$.
Similarly to before, define $A_{G-S}$ to be the subgraph of $G-S$ consisting of connected components with strictly fewer than $t-1$ vertices and $B_{G-S}$ the subgraph of $G-S$ consisting of components
with at least $t-1$ vertices.
For a contradiction, suppose $u$ and $v$ are two vertices in the same component, $F$, of $G-S$ and they are adjacent to two distinct vertices in $S$.
 Because of $(\diamondsuit)$ and because the vertices in component $F$ are adjacent to at least $2$ distinct vertices in $S$, $c(G-S) < |S|=r$.
This leads to two important facts:
\begin{enumerate}[(i)]
\item Because $G-S$ is a disjoint union of cliques, by Observation \ref{indnumcomponents}, $\alpha(G-S)<r$.
\item $A_{G-S}$ contains only isolated vertices (else choose a vertex in $A_{G-S}$ with at least one incident edge and delete all such. This is still an $(n-r, k-r, t-1)$-graph with independence number at most $r$ by (i), and strictly fewer edges than $G-S$, contradicting minimality). In particular, as $|F| \geq 2$, $F$ must be a component of $B_{G-S}$.
\end{enumerate}

Let $Y$ be a set of $(t-2)(c(B_{G-S})-1)$ vertices, containing exactly $t-2$ vertices from each connected component of $B_{G-S}-V(F)$.
Let $Z$ be a set of $t-3$ vertices in $V(F)-\{u,v\}$ (this is possible because  of (ii)).
Define
\begin{equation*}
X'= S \sqcup V(A_{G-S}) \sqcup Y \sqcup Z \sqcup \{u,v\}.
\end{equation*}
We now show $\vert X' \vert =k$.
Consider two cases.
Suppose $k-r > t-1$.
Because $G-S$ is an $(n-r, k-r, t-1)$-graph,
using (i) 
and
Lemma \ref{notkminus1} shows $G-S$ is not an $(n- r, k-r-1, t-1)$-graph.
So
Lemma \ref{l.nktrelation} (c) gives
\begin{equation} \label{largestset}
    k-r-1
    =
    \vert V(A_{G-S}) \vert +(t-2)c(B_{G-S}).
\end{equation}
Now, suppose $k-r=t-1$.
Then, $G-S=K_{n-r}$, so $\vert V(A_{G-S}) \vert =0$ and $c(B_{G-S})=1$. 
Therefore, Equation \eqref{largestset} holds for all $k-r \geq t-1$.
Thus, in either case, because the union in the definition of $X'$ is necessarily a disjoint union,
\begin{equation*}
\begin{array}{llr}
    \vert X' \vert & \multicolumn{2}{l}{= r + \vert V(A_{G-S}) \vert + (t-2)(c(B_{G-S})-1) + (t-3) + 2}\\
    & = r + (k-r-1) + 1 & \text{ (By \eqref{largestset})}\\
    & = k. & \\
\end{array}
\end{equation*}

Any clique of size $t$ in $G[X']$ contains at most $1$ vertex from $S$ since $S$ is an independent set.
Thus, if $G[X']$ contains a $K_t$, then at least $t-1$ vertices in $X'$ must be in the same connected component of $G[V(A_{G-S}) \cup Y \cup Z \cup \{u,v\}]$.
The set $X'$ does not contain $t-1$ vertices from the same connected components in $A_{G-S}$ or $Y$.
The set $X'$ contains $t-1$ vertices from $V(F)$ (namely, $Z \cup \{u,v\}$), but because each vertex in $V(F)$ is adjacent to exactly $1$ vertex in $S$ and $u$ and $v$ are adjacent to two distinct vertices in $S$, $u$ and $v$ are not in a clique on $t$ vertices in $G[X']$.
Thus, $G[X']$ does not contain a clique on $t$ vertices, contradicting $G$ being an $(n,k,t)$-graph.
\end{proof}

Theorem \ref{mainthm} implies Theorem \ref{strongthm} proving the $(n,k,t)$-conjectures.
Indeed, a minimum $(n,k,t)$-graph is one with the minimum number of edges among all minimum $(n,k,t,r)$-graphs for $1 \leq r < k-t+2$.

Note, even if we relax the definition of an $(n,k,t,r)$-graph and let $n$, $k$, and $t$ be any positive integers, Theorem \ref{mainthm} still holds.
If $k > n$ then there does not exist a set of $k$ vertices, so all graphs on $n$ vertices are $(n,k,t,r)$-graphs.
Thus, the unique minimum $(n,k,t,r)$-graph is $\cTr{n}{r}$ by Tur\'{a}n's Theorem.
Also, if $n \geq k$ and $t>k$, then an induced subgraph on $k$ vertices cannot contain a clique on $t$ vertices, so no graphs are $(n,k,t,r)$-graphs.
So, the theorem holds vacuously.

In light of Observation \ref{uniqueindnum}, one might be led to believe for every positive integer $r$, there exists a unique minimum $(n,k,t,r)$-graph.
However, this is not the case.

\begin{observation}
There exist $n,k,t,$ and $r$ for which 
the minimum $(n,k,t,r)$-graph is not unique.
\end{observation}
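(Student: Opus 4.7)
The plan is to exhibit an explicit pair of non-isomorphic minimum $(n,k,t,r)$-graphs for well-chosen parameters. By Theorem~\ref{mainthm}, any minimum $(n,k,t,r)$-graph is a disjoint union of cliques, and by Observation~\ref{indnumcomponents} such a graph has exactly $r$ components. Hence the search reduces to finding two partitions $(a_1,\ldots,a_r)$ of $n$ into $r$ positive parts which both satisfy the inequality $(t-1)c(B_\Gamma)+|V(A_\Gamma)|\leq k-1$ from Lemma~\ref{l.nktrelation}(b) and which both attain the minimum of $\sum_i\binom{a_i}{2}$ among all such partitions.

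Before choosing parameters, I note that the case $t=3$ is always unique: Lemma~\ref{l.nktrelation}(b) then only separates isolated-vertex components from components of size $\geq 2$, so once the number of isolated vertices is fixed the remaining components are forced to be balanced by Tur\'an's Theorem. Non-uniqueness therefore requires $t\geq 4$, where intermediate clique sizes $2,\ldots,t-2$ contribute sub-maximally to the left-hand side of the constraint and can be traded against one another. The main (though not deep) obstacle is to find small convenient parameters which exhibit this slack.

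I propose the parameters $(n,k,t,r)=(10,9,4,4)$ with the two candidate graphs
\[
G_1 := 2K_1 + 2K_4 \quad\text{and}\quad G_2 := K_1 + 2K_2 + K_5,
\]
each of which has $10$ vertices, $4$ components, and $12$ edges. To complete the proof I would first apply Lemma~\ref{l.nktrelation}(b) to verify that each is a $(10,9,4)$-graph: the inequality $3c(B_\Gamma)+|V(A_\Gamma)|\leq 8$ evaluates to $3\cdot 2+2=8$ for $G_1$ and to $3\cdot 1+5=8$ for $G_2$. I would then enumerate the (few) partitions of $10$ into $4$ positive parts, discard those whose associated disjoint union of cliques fails the constraint, and check that no survivor yields fewer than $12$ edges. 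Combined with Theorem~\ref{mainthm}, this would show $G_1$ and $G_2$ are two non-isomorphic minimum $(10,9,4,4)$-graphs, proving the observation.
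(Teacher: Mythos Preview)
Your approach—exhibiting an explicit pair of non-isomorphic disjoint-union-of-cliques graphs and checking minimality by enumerating the admissible partitions—is exactly what the paper does, and your example $(n,k,t,r)=(10,9,4,4)$ with $2K_1+2K_4$ and $K_1+2K_2+K_5$ is correct. The paper instead takes $(9,8,4,3)$ with the pair $2K_2+K_5$ and $K_1+2K_4$, which is precisely your pair with one isolated vertex removed from each graph (and $n,k,r$ each decreased by $1$).
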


For example, $2K_2+K_5$ and $K_1+2K_4$ are both minimum $(9,8,4,3)$-graphs.
These can each be formed as described in the proof of Theorem \ref{mainthm} by letting $G-S$ be the minimum $(6,5,3)$-graphs, $2K_1+K_4$ or $K_3 + K_3$, respectively.
However, by Observation \ref{uniqueindnum} each minimum $(n,k,t)$-graph has a unique independence number.
In this example, by Theorem \ref{mincliquegraph} and Theorem \ref{strongthm}, the minimum $(9,8,4)$-graph is $4K_1 + K_5$ and this is the unique minimum $(9,8,4,5)$-graph.

\section{Future Directions}

Our main result and \cite{REU} together solve the extremal problem of finding the minimum number of edges in an $(n,k,t)$-graph.
Viewing edges as cliques on $2$ vertices leads to one possible generalization of this problem.

\begin{question} \label{futurescliques}
Let $s$ be a positive integer. What is the minimum number of cliques on $s$ vertices in an $(n,k,t)$-graph?
\end{question}

A logical first step may be to ask:
What is the minimum number of cliques on $s$ vertices in an $(n,k,2)$-graph?
Recall, when $s=2$, this is equivalent to Tur\'{a}n's Theorem.
For general $s$, it turns out this is a special case of a question asked by Erd\H{o}s \cite{Erdos}.
He conjectured that a disjoint union of cliques would always be best.
However, Nikiforov \cite{nik} disproved this conjecture, observing that a clique-blowup of $C_5$ has independence number $2$ (so is an $(n,3,2)$-graph), but has fewer cliques on $4$ vertices than $2K_{\frac{n}{2}}$---the graph which has fewest $K_4$'s among all $(n,3,2)$-graphs
that are a disjoint union of cliques.
Das et al. \cite{das} and Pikhurko and Vaughan \cite{pik} independently found the minimum number of cliques on $4$ vertices in an $(n,3,2)$-graph for $n$ sufficiently large and Pikhurko and Vaughan \cite{pik} found the minimum number of cliques on $5$ vertices, cliques on $6$ vertices, and cliques on $7$ vertices in an $(n,3,2)$-graph for $n$ sufficiently large.
But their approaches are non-elementary and rely heavily on Razborov's flag algebra method.
A summary on related results can be found in Razborov's survey \cite{razborov}.

In \cite{noble2017application}, Noble et al. showed for $n \geq k \geq t \geq 3$ and $k \leq 2t-2$ every $(n,k,t)$-graph must contain a $K_{n-k+t}$.
Because $(k-t) K_1 + K_{n-k+t}$ is an $(n,k,t)$-graph, this shows for $n \geq k \geq t \geq 3$ and $k \leq 2t-2$ the minimum number of $s$-cliques an $(n,k,t)$-graph must contain is ${n-k+t \choose s}$ 
(noting ${n-k+t \choose s}=0$ if $s > n-k+t$).

Thus, the smallest interesting open question of this form is as follows:
\begin{question}
What is the minimum possible number of triangles in an $(n,5,3)$-graph?
\end{question}

We can also consider the other end of the spectrum.
What is the minimum number of cliques on $t$ vertices in an $(n,k,t)$-graph?
Also, for what values of $n$, $k$, and $t$ does an $(n,k,t)$-graph necessarily contain at least one clique of $t+1$ vertices?

Similar to Question \ref{futurescliques}, one could ask: What is the minimum number $n$ that an $(n,k,t)$-graph must contain a clique on $s$ vertices?
However, if $t=2$, this is exactly the Ramsey number $R(k,s)$.
Thus, this question seems very hard in general.
%

One other common direction to take known results in extremal graph theory where extremal structures have been identified, is to ask whether every \emph{near-optimal} example can be modified to make the optimal example using relatively few edits. For example, a result of F\"{u}redi \cite{furedi2015proof}, when complemented, gives the following strengthening of Tur\'{a}n's Theorem:

\begin{theorem}\label{furedi}
Let $G$ be an $n$-vertex graph without an independent set of size $r+1$ with $|E(G)| \leq |E(\cTr{n}{r})|+q$. Then, upon adding at most $q$ additional edges, $G$ contains a spanning vertex-disjoint union of at most $r$ cliques.
\end{theorem}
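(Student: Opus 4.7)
The claim becomes transparent in the complement. Let $H := \overline{G}$, which is $K_{r+1}$-free (since $\alpha(G) \le r$) and satisfies $|E(H)| \ge |E(\Tr{n}{r})| - q$. An $r$-partition of $V(H)$ into independent sets obtained after removing at most $q$ edges corresponds exactly to the desired spanning union of at most $r$ cliques in $G$ (with $\le q$ added edges). So the task reduces to showing that $H$ can be made $r$-partite by deleting at most $q$ edges.

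My plan is to take an $r$-partition $\mathcal{P}=(V_1,\ldots,V_r)$ of $V(H)$ that minimizes the within-part edge count $q^*(\mathcal{P}) := \sum_i e_H(V_i)$ (equivalently, maximizes the $r$-cut), and then show $q^*(\mathcal{P}) \le q$. Any such $\mathcal{P}$ satisfies the local optimality condition
\[
|N_H(v)\cap V_i| \le |N_H(v)\cap V_j| \quad \text{for every } v\in V_i \text{ and every } j\ne i,
\]
from which one immediately obtains the crude bound $q^*(\mathcal{P}) \le |E(H)|/r$ and, in particular, every endpoint of a within-part edge has at least one neighbor in every other part.

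The main leverage comes from $K_{r+1}$-freeness: to each within-part edge $uv \in V_i$ one associates the family of transversals $(w_j)_{j\ne i}\in\prod_{j\ne i}V_j$; since no such transversal can form a $K_{r+1}$ with $\{u,v\}$, every transversal contains a missing edge of $H$. Double-counting the pairs (within-part edge, witnessing missing edge) should force a lower bound on the total number of missing between-part edges $m_b(\mathcal{P})$ in terms of $q^*(\mathcal{P})$; plugging this into the decomposition
\[
|E(H)| = q^*(\mathcal{P}) + \sum_{i<j} n_i n_j - m_b(\mathcal{P}),
\]
combined with the Turán bound $\sum_{i<j} n_i n_j \le |E(\Tr{n}{r})|$ and the hypothesis $|E(H)| \ge |E(\Tr{n}{r})| - q$, should squeeze out $q^*(\mathcal{P}) \le q$.

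The chief obstacle is that a single missing between-part edge may serve as witness for several within-part edges, so achieving the \emph{exact} bound $q^*(\mathcal{P}) \le q$ (rather than $\le Cq$) requires either a carefully weighted counting scheme or an auxiliary rebalancing argument --- choosing, among all minimizers of $q^*$, one whose part sizes are as balanced as possible, so that the Turán bound above is tight. An alternative I would fall back on is induction on $r$: a vertex $v^*$ of maximum degree must have $d(v^*)$ close to $(1-1/r)n$ by the edge-density hypothesis, its neighborhood $N_H(v^*)$ is $K_r$-free with $|E(H[N_H(v^*)])|$ close to $|E(\Tr{d(v^*)}{r-1})|$, and one applies the inductive hypothesis to partition $N_H(v^*)$ into $r-1$ nearly-independent sets; the non-neighborhood together with $v^*$ forms the $r$-th part, and the boundary edge-count is the analogous key technical point to control.
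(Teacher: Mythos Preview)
The paper does not prove this theorem. Theorem~\ref{furedi} is quoted (in complemented form) from F\"uredi's paper \cite{furedi2015proof} in the Future Directions section, purely to motivate a question; no proof is given here, so there is nothing in the paper to compare your attempt against.

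As for the attempt itself: what you have written is a plan, not a proof, and you say so explicitly. You correctly pass to the complement $H=\overline G$, take a maximum $r$-cut $(V_1,\dots,V_r)$, and record the local-optimality inequality $d_{V_i}(v)\le d_{V_j}(v)$ for $v\in V_i$. But the ``chief obstacle'' you identify is genuine and you do not resolve it: the transversal double-count of pairs (inside edge, witnessing missing cross edge) only yields $q^*(\mathcal P)\le Cq$ for some constant $C>1$, since a single missing cross edge can witness many inside edges, and neither the promised ``carefully weighted counting scheme'' nor the ``rebalancing argument'' is actually supplied. Your fallback induction on $r$ via a maximum-degree vertex is the Erd\H os route to Tur\'an's theorem, but again you do not show how the error terms telescope to give exactly $q$ rather than a constant multiple of $q$. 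Both branches of the proposal therefore stop at the same place: a stability statement with an unspecified (and, as sketched, wrong) constant. If you want to turn this into an actual proof you will need to either carry out F\"uredi's short degree-summation argument or genuinely execute one of the two strategies you outline rather than defer them.
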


Fixing an independence number $r$ and writing $G'$ for a minimum $(n,k,t,r)$-graph
(as in Theorem \ref{mainthm}), this leads us to the following question:
\begin{question}
Suppose $G$ is an $(n,k,t)$-graph with independence number $r$, satisfying $|E(G)| \leq |E(G')|+q$. Does there exist a function $f(q)$ such that only $f(q)$ edges need adding to make $G$ contain a spanning disjoint union of $\leq r$ cliques?
\end{question}

One would need the function $f(q)$ to be independent of $n$ in order for the above to be interesting. Does $f$ need to depend on $k$? Or on $t$? Theorem \ref{furedi} suggests that the answer to the first question may actually be no.

Finally, in light of the constructive nature of the proof of Theorem \ref{mainthm}, one may ask whether every \emph{inclusion-minimal} $(n,k,t)$-graph (that is, an $(n,k,t)$-graph which is no longer $(n,k,t)$ upon the removal of any edge) is necessarily a disjoint union of cliques. But, there are counterexamples to this even in the original setting of Tur\'an's theorem ($t=2$). For example, $C_5$ is an inclusion-minimal $(5,3,2)$-graph, but contains neither of the inclusion-minimal $(5,3,2)$-graphs which are disjoint unions of cliques ($K_2+K_3$ and $K_1+K_4$).

Since there are inclusion-minimal $(n,k,t)$-graphs which are not disjoint unions of cliques, this suggests the saturation problem for $(n,k,t)$-graphs is interesting:

\begin{question}
Among all inclusion-minimal $(n,k,t)$-graphs, what is the maximum possible number of edges?
\end{question}
The classical saturation result of Zykov \cite{zykov1949some} and Erd\H{o}s-Hajnal-Moon \cite{erdos1964problem} (when stated for graph complements) says that when $t=2$, the answer is given by $(k-2)K_1+K_{n-k+2}$.
The example above shows there are instances of inclusion-minimal $(n,k,t)$-graphs that are not disjoint unions of cliques, but $K_1+K_4$ still has more edges than $C_5$. In general, we conjecture that the maximum is still always attained by a disjoint union of cliques.


\section*{Acknowledgements}
We are indebted to Peter Johnson for introducing us to this problem and suggestions improving an earlier version of the script, and to Paul Horn for helpful discussions.
The first author would also like to thank Alex Stevens, Andrew Owens, Calum Buchanan, and the rest of the Masamu Graph Theory Research Group for thoughtful conversations about the topic.
A portion of this work was completed while the first author was a student at Auburn University.
Finally, we would like to thank the referees for their insightful comments and suggestions.


\begin{thebibliography}{10}

\bibitem{REU}
J.~Allen, B.~Drynan, D.~Dunst, Z.~Hefty, and E.~Smith.
\newblock A precise definition of an extremal family.
\newblock Submitted.

\bibitem{bukh2015random}
B.~Bukh.
\newblock Random algebraic construction of extremal graphs.
\newblock {\em Bulletin of the London Mathematical Society}, 47(6):939--945,
2015.

\bibitem{das}
S.~Das, H.~Huang, J.~Ma, H.~Naves, and B.~Sudakov.
\newblock A problem of erd{\H{o}}s on the minimum number of $k$-cliques.
\newblock {\em Journal of Combinatorial Theory, Series B}, 103(3):344--373,
  2013.

\bibitem{Erdos}
P.~Erd\H{o}s.
\newblock On the number of complete subgraphs contained in certain graphs.
\newblock {\em Magyar Tud. Akad. Mat. Kutat\'{o} Int. K\"{o}zl.}, 7:459--464,
  1962.

\bibitem{erdos1964problem}
P.~Erd\H{o}s, A.~Hajnal, and J.~W. Moon.
\newblock A problem in graph theory.
\newblock {\em The American Mathematical Monthly}, 71(10):1107--1110, 1964.

\bibitem{erdos1961minimal}
P.~Erdos and T.~Gallai.
\newblock On the minimal number of vertices representing the edges of a graph.
\newblock {\em Publ. Math. Inst. Hungar. Acad. Sci}, 6(18):1--203, 1961.

\bibitem{erdos1946structure}
P.~Erd{\"o}s and A.~H. Stone.
\newblock On the structure of linear graphs.
\newblock {\em Bulletin of the American Mathematical Society},
  52(12):1087--1091, 1946.

\bibitem{furedi2015proof}
Z.~F{\"u}redi.
\newblock A proof of the stability of extremal graphs, simonovits' stability
  from szemer{\'e}di's regularity.
\newblock {\em Journal of Combinatorial Theory, Series B}, 115:66--71, 2015.

\bibitem{furedi2013history}
Z.~F{\"u}redi and M.~Simonovits.
\newblock The history of degenerate (bipartite) extremal graph problems.
\newblock In {\em Erd{\H{o}}s Centennial}, pages 169--264. Springer, 2013.

\bibitem{hajnal1965theorem}
A.~Hajnal.
\newblock A theorem on $k$-saturated graphs.
\newblock {\em Canadian Journal of Mathematics}, 17:720--724, 1965.

\bibitem{Hoffman}
D.~Hoffman, P.~Johnson, and J.~McDonald.
\newblock Minimum {$(n,k,t)$} clique graphs.
\newblock {\em Congr. Numer.}, 223:199--204, 2015.

\bibitem{HoffPav}
D.~Hoffman and H.~Pavlis.
\newblock Private communication.

\bibitem{nik}
V.~Nikiforov.
\newblock On the minimum number of $k$-cliques in graphs with restricted
  independence number.
\newblock {\em Combinatorics, Probability and Computing}, 10(4):361--366, 2001.

\bibitem{noble2017application}
M.~Noble, P.~Johnson, D.~Hoffman, and J.~McDonald.
\newblock Application of an extremal result of erd{\H{o}}s and gallai to the
  $(n, k, t)$ problem.
\newblock {\em Theory and Applications of Graphs}, 4(2):1, 2017.

\bibitem{pik}
O.~Pikhurko and E.~R. Vaughan.
\newblock Minimum number of $k$-cliques in graphs with bounded independence
  number.
\newblock {\em Combinatorics, Probability and Computing}, 22(6):910--934, 2013.

\bibitem{razborov}
A.~A. Razborov.
\newblock Flag algebras: an interim report.
\newblock In {\em The Mathematics of Paul Erd{\H{o}}s II}, pages 207--232.
  Springer, 2013.

\bibitem{zykov1949some}
A.~A. Zykov.
\newblock On some properties of linear complexes.
\newblock {\em Matematicheskii sbornik}, 66(2):163--188, 1949.

\end{thebibliography}

\end{document}